\documentclass[12pt, a4paper]{amsart}

\usepackage[english]{babel}
\usepackage{amssymb,amsmath,amsthm,amsfonts}
\usepackage{graphicx}
\usepackage{xcolor}
\usepackage{enumerate}
\usepackage{tikz}

\newtheorem{theorem}{Theorem}
\newtheorem{example}[theorem]{Example}
\newtheorem{lemma}[theorem]{Lemma}
\newtheorem{remark}[theorem]{Remark}
\newtheorem{definition}[theorem]{Definition}

\numberwithin{equation}{section}
\numberwithin{theorem}{section}

\begin{document}

\title[Revolving structure of the L\'evy dragon]{On the revolving structure of the L\'evy dragon and its linear transforms}

\author[M. Gonzalez-Carriedo]{Miguel Gonzalez-Carriedo}
\address{Miguel Gonzalez-Carriedo, Department of Mathematics, University of North Texas, 1155 Union Circle, Denton, TX 76203-5017, USA}
\email{MiguelGonzalez-Carriedo@my.unt.edu}

\author[K. Kawamura]{Kiko Kawamura}
\address{Kiko Kawamura, Department of Mathematics, University of North Texas, 1155 Union Circle, Denton, TX 76203-5017, USA}
\email{Kiko.Kawamura@unt.edu}

\author[J. Leung]{Jonathan Leung}
\address{Jonathan Leung, Department of Mathematics, University of North Texas, 1155 Union Circle, Denton, TX 76203-5017, USA}
\email{JonathanLeung@my.unt.edu}

\author[R. D. Prokaj]{R. D\'{a}niel Prokaj}
\address{ R. D\'{a}niel Prokaj, Department of Mathematics, University of North Texas, 1155 Union Circle, Denton, TX 76203-5017, USA}
\email{Rudolf.Prokaj@unt.edu}

\begin{abstract}
  L\'evy's Dragon Curve is a well-known self-similar fractal, notable for its ability to tile the complex plane. We review a representation of the curve as a set of points given by complex power series satisfying a revolving condition, and study how this representation changes under linear transformations, while preserving its characteristic geometric properties.

  We introduce a labeled directed graph that encodes these series representations and show that this directed graph remains invariant under linear transformations, with only the labeling subject to change. Furthermore, we provide a geometric characterization of the resulting variations in graph labeling.
\end{abstract}

\keywords{L\'evy's Dragon Curve, Power Series Representation, Revolving structure, Directed Graph} 

\subjclass[2020]{Primary: 28A80; Secondary: 37B10}

\maketitle
\thispagestyle{empty}

\section{Introduction}

L\'evy’s Dragon Curve (Figure \ref{lt89}) is a striking self-similar fractal, notable for its ability to tile the complex plane. It was introduced and studied in 1938 by P. L\'evy \cite{Levy-1938}. The curve is the unique attractor $\Lambda$ generated by the following planar iterated function system (IFS):
\begin{equation}\label{lt83}
\mathcal{F}=
  \bigg\{
    f_0(z)=\frac{1-i}{2}z,\quad 
    f_1(z)=\frac{1+i}{2}z+\frac{1-i}{2}  
  \bigg\}.
\end{equation}
That is, $\Lambda$ is the unique non-empty compact set satisfying 
\begin{equation}\label{lt84}
  \Lambda = f_0(\Lambda)\cup f_1(\Lambda).
\end{equation} 

Figure \ref{fig:construction} shows how $\Lambda$ is obtained recursively from the isosceles triangle $A_0$ with vertices $0,1$, and $(1-i)/2$ under the action of the IFS \eqref{lt83}. From this construction, it is immediate that the area enclosed by $\Lambda$ coincides with that of $A_0$. Indeed, L\'evy proved that $\Lambda$ is a space-filling curve. 

\begin{figure}[h]
\begin{center}
\begin{picture}(320,180)(20,-120)
	\put(14,50){\line(1,-1){40}}
	\put(14,50){\line(1,0){80}}
	\put(54,10){\line(1,1){40}}
	\put(34,40){\vector(-1,-1){10}}
	\put(74,40){\vector(1,-1){10}}
	\put(12,30){\makebox(0,0)[tl]{$f_0$}}
	\put(86,30){\makebox(0,0)[tl]{$f_1$}}	
	\put(49,-10){\makebox(0,0)[tl]{$A_0$}}
	\put(116,50){\line(1,-1){40}}
	\put(156,10){\line(1,1){40}}
	\put(116,10){\line(0,1){40}}
	\put(116,10){\line(1,0){80}}
	\put(196,50){\line(0,-1){40}}
	\put(121,20){\vector(-1,0){10}}
	\put(83,15){\makebox(0,0)[tl]{$f_0 \circ f_0$}}
	\put(136,15){\vector(-1,-2){5}}
	\put(111,0){\makebox(0,0)[tl]{$f_1 \circ f_0$}}
	\put(176,15){\vector(1,-2){5}}
	\put(168,0){\makebox(0,0)[tl]{$f_0 \circ f_1$}}
	\put(191,20){\vector(1,0){10}}
	\put(198,15){\makebox(0,0)[tl]{$f_1 \circ f_1$}}
	\put(156,-10){\makebox(0,0)[tl]{$A_1$}}
	\put(247,10){\line(0,1){40}}
	\put(247,10){\line(1,0){80}}
	\put(327,50){\line(0,-1){40}}
	\put(287,-10){\makebox(0,0)[tl]{$A_2$}}
	\put(247,10){\line(-1,1){20}}
	\put(227,30){\line(1,1){20}}
	\put(247,10){\line(1,-1){20}}
	\put(267,-10){\line(1,1){20}}
	\put(287,10){\line(1,-1){20}}
	\put(307,-10){\line(1,1){20}}
	\put(327,50){\line(1,-1){20}}
	\put(347,30){\line(-1,-1){20}}
	\put(90,-115){\makebox(0,0)[tl]{$A_3$}}
	\put(35,-60){\line(1,1){20}}
	\put(55,-80){\line(-1,1){20}}
	\put(55,-80){\line(1,-1){20}}
	\put(75,-100){\line(1,1){20}}
	\put(95,-80){\line(1,-1){20}}
	\put(115,-100){\line(1,1){20}}
	\put(135,-40){\line(1,-1){20}}
	\put(155,-60){\line(-1,-1){20}}
	\put(55,-80){\line(-1,0){20}}
	\put(35,-80){\line(0,1){40}}
	\put(35,-40){\line(1,0){20}}
	\put(55,-100){\line(0,1){20}}
	\put(55,-100){\line(1,0){80}}
	\put(95,-80){\line(0,-1){20}}
	\put(135,-80){\line(0,-1){20}}
	\put(135,-80){\line(1,0){20}}
	\put(155,-40){\line(0,-1){40}}
	\put(155,-40){\line(-1,0){20}}
	\put(260,-115){\makebox(0,0)[tl]{$A_4$}}
	\put(225,-40){\line(-1,0){20}}
	\put(205,-80){\line(0,1){40}}
	\put(205,-80){\line(1,0){20}}
	\put(225,-100){\line(0,1){20}}
	\put(225,-100){\line(1,0){80}}
	\put(305,-80){\line(0,-1){20}}
	\put(305,-80){\line(1,0){20}}
	\put(325,-40){\line(0,-1){40}}
	\put(325,-40){\line(-1,0){20}}
	\put(225,-80){\line(-1,-1){10}}
	\put(215,-90){\line(-1,1){10}}
	\put(205,-80){\line(-1,1){10}}
	\put(195,-50){\line(1,1){10}}
	\put(205,-60){\line(-1,1){10}}
	\put(195,-70){\line(1,1){10}}
	\put(205,-40){\line(1,1){10}}
	\put(215,-90){\line(1,-1){10}}
	\put(215,-30){\line(1,-1){10}}
	\put(225,-100){\line(1,-1){10}}
	\put(235,-110){\line(1,1){10}}
	\put(245,-100){\line(1,-1){10}}
	\put(255,-110){\line(1,1){10}}
	\put(265,-100){\line(1,-1){10}}
	\put(265,-100){\line(0,1){20}}
	\put(265,-100){\line(1,1){10}}
	\put(275,-90){\line(-1,1){10}}
	\put(265,-80){\line(-1,-1){10}}
	\put(255,-90){\line(1,-1){10}}
	\put(275,-110){\line(1,1){10}}
	\put(285,-100){\line(1,-1){10}}
	\put(295,-110){\line(1,1){30}}
	\put(305,-40){\line(1,1){10}}
	\put(325,-60){\line(1,-1){10}}
	\put(335,-70){\line(-1,-1){10}}
	\put(325,-40){\line(1,-1){10}}
	\put(335,-50){\line(-1,-1){10}}
	\put(315,-30){\line(1,-1){10}}
	\put(315,-90){\line(-1,1){10}}
	\end{picture}
\caption{The first five steps of the construction of L\'evy's dragon curve.}
\label{fig:construction}
\end{center}
\end{figure}

A natural question is whether there exist alternative ways to represent 
L\'evy's dragon, and in particular, whether it admits an explicit description. From its construction, it is clear that every point $z\in \Lambda$ can be encoded by at least one infinite sequence $(k_i)_{i=1}^{\infty} \in \{0,1\}^{\mathbb{N}}$ such that 
\begin{equation*}
    z= \lim_{n \to \infty}f_{k_1} \circ f_{k_2} \circ \dots \circ f_{k_n} (0).
\end{equation*}

This naturally leads to the question of whether the L\'evy dragon admits another, more explicit representation. Since each point $z\in\Lambda$ is a complex number, it is reasonable to ask whether $\Lambda$ can be described in terms of a complex power series.

\medskip

Eleven years before Hutchinson’s seminal 1981 paper, which introduced the modern framework of iterated function systems, C.~Davis and D.~E.~Knuth developed the notion of \texttt{revolving representations} for Gaussian integers. Specifically, for any Gaussian integer $z=x+iy$ where $x, y \in \mathbb{Z}$,  there exists a finite sequence $(\delta_{0}, \delta_{1},\dots \delta_{N})$ such that  
\begin{equation*}
  z=\sum_{n=0}^{N} \delta_{N-n}(1+i)^{n}:=(\delta_0, \delta_1, \cdots, \delta_N)_{1+i}, 
\end{equation*}
where each digit $\delta_n \in \{0, 1, -1, i, -i\}$ satisfies the condition that the nonzero digits follow the unit circle in a counterclockwise fashion, as illustrated in Figure \ref{lt81}. They referred to this rule as the \texttt{revolving condition}.

\begin{figure}[h]
  \begin{center}
  \begin{tikzpicture}[scale=2]
    \draw[->] (-1.1,0) -- (1.1,0);
    \draw[->] (0,-1.1) -- (0,1.1);
    \draw [opacity=0.3](0,0) circle (1);
    \foreach \angle in {37.5, 127.5, 217.5, 307.5} {
        \draw[<-,line width=1pt] (\angle:1) arc (\angle:\angle+15:1);
    }
    \node at (1.3,0) {$1$};
    \node at (-1.3,0) {$-1$};
    \node at (0,1.3) {$i$};
    \node at (0,-1.3) {$-i$};
  \end{tikzpicture}
  \end{center} 
  \caption{The unit circle in the complex plane with rotation $\theta=-\frac{\pi}{2}$.}
  \label{lt81}
\end{figure} 

They proved that every Gaussian integer possesses exactly four revolving representations, distinguished by the value of the rightmost nonzero digit, which may be $1, -1, i$ or $-i$. For example, 
\begin{align*}
3+i &=((-1), i, 0, 1, 0)_{1+i} =(i, 1, (-i), 0, (-1), i, 0)_{1+i} \\
    &=(i, 1, (-i), 0, 0, (-1), 0)_{1+i} =((-1), i, 1, (-i), 0)_{1+i}
\end{align*}

Let $W_{\theta}$ denote the set of all revolving sequences with a fixed angle $\theta$. In 1987, M.~Mizutani and S.~Ito~\cite{Ito-1987} considered the following set of points. 
\begin{equation}
\label{Mizutani-Ito-X}
  P:=\left\{ \sum_{n=1}^{\infty} \delta_{n}(1+i)^{-n}: \delta_{j_1}=1, (\delta_{1}, \delta_{2}, \dots ) \in W_{-\pi/2} \right\},
\end{equation}
where $\delta_j \in \{0, 1, -1, i, -i\}$ and $j_1= \min\{j: \delta_j \neq 0\}$. 
They proved that $P$ is a paper-folding dragon, which is a tiling fractal (Figure \ref{lt89}).

In the same paper, they also proposed an interesting conjecture. Suppose that the digits $\delta_{n}$ move counterclockwise on the unit circle rather than clockwise. Their computer simulations suggested that the corresponding set is a L\'evy dragon. More specifically,
\begin{equation}
\label{lt82}
  \Lambda=\left\{ \sum_{n=1}^{\infty} \delta_{n}(1+i)^{-n}: \delta_{j_1}=1, (\delta_{1}, \delta_{2}, \dots ) \in W_{\pi/2} \right\},
\end{equation}
where $\delta_j \in \{0, 1, -1, i, -i\}$ and $j_1= \min\{j: \delta_j \neq 0\}$. 

Fifteen years later, Kawamura \cite{Kawamura-2002} proved that their conjecture is correct using a functional equation approach. More recently, Kawamura and Allen \cite{Kawamura-Allen-2021} introduced the notion of {\it generalized revolving sequences}, in which the rotation angle $\theta$ is allowed to vary more generally. They showed that certain self-similar attractors generated by iterated function systems consisting of two similarities with a common scale factor---one of which includes a rotation through angle $\theta$ ---admit natural complex power series representations satisfying a generalized revolving condition. Their result may be viewed as a natural generalization of the earlier work of Mizutani and Ito.

It is important to note that such representations are intrinsically tied to the underlying IFS. For example, the representation in \eqref{lt82} corresponds specifically to the system given in \eqref{lt83}. When $\Lambda$ is translated in the complex plane, the resulting set still retains the characteristic geometric properties of L\'evy’s Dragon Curve, but it is generated by a different IFS. This naturally leads to the following question:
\medskip

\textit{Can every translation of L\'evy’s Dragon Curve be represented by a complex power series satisfying a revolving condition?}

\medskip
To address this question, we first introduce a general notion of revolving numbers by introducing a labeled directed graph $\mathcal{G}=\{\mathcal{V},\mathcal{E},\mathcal{L}\}$ that takes the place of the revolving condition.
Directed graphs are determined by a set of nodes $\mathcal{V}=\{v_1,\dots, v_M\}$ and a set of directed edges $\mathcal{E}$. Each edge $e\in\mathcal{E}$ is an ordered pair of two nodes. When the graph is also labeled, we have a mapping $\mathcal{L}$ that acts on $\mathcal{V}$ and labels the nodes with a finite alphabet.
In this paper, we will argue that our definition captures the structural intricacies better than the revolving conditions.
\begin{definition}
  We say that $x\in\mathbb{C}$ is a \texttt{revolving number} with base $b\in\mathbb{C}$, digit set $\mathcal{D}=\{\delta_1,\dots,\delta_d\}$ and labeled directed graph $\mathcal{G}=(\mathcal{V},\mathcal{E}, \mathcal{L})$ if the following hold
  \begin{enumerate}
    \item $\mathcal{L}:\mathcal{V}\to\mathcal{D}$ labels the nodes with the digits;
    \item $x=c\cdot\sum_{n=1}^{\infty} b^{-n}\delta_{j_n}$ for a digit sequence $(\delta_{j_n})_{n\geq 1}\in\mathcal{D}^{\mathbb{N}}$ and some constant $c\in\mathbb{C}$;
    \item There exists a $(v_{j_n})_{n\geq 1}\in\mathcal{V}^{\mathbb{N}}$ sequence of nodes such that \\ $\mathcal{L}(v_{j_n})=\delta_{j_n}$ and $(v_{j_n},v_{j_{n+1}})\in\mathcal{E}$ for every $n\geq 1$.
  \end{enumerate}
 
\end{definition}

Let $F$ be the attractor of a planar IFS. If all points of $F$ are revolving numbers of base $b\in\mathbb{C}$, digit set $\mathcal{D}=\{\delta_1,\dots,\delta_d\}$ and labeled directed graph $\mathcal{G}$, then we say that $F$ has a \texttt{revolving structure} with parameters $(b, \mathcal{D}, \mathcal{G})$. 

\begin{definition}
  Let $F_1$ and $F_2$ be two sets on the plane of revolving structures with parameters $(b_1,\mathcal{D}_1,\mathcal{G}_1)$ and $(b_2,\mathcal{D}_2,\mathcal{G}_2)$. If the labeled digraphs $\mathcal{G}_1$ and $\mathcal{G}_2$ are defined by the same node and edge sets, hence the only difference between them is the labeling, then we say that $F_1$ and $F_2$ have the \texttt{same revolving pattern}. 
\end{definition}

\begin{theorem}[Main Theorem]\label{lt93}
  Let $S$ be a non-singular linear transformation on $\mathbb{C}$ and $S(\Lambda)$ be the transformed L\'evy dragon. Then, $S(\Lambda)$ also admits a revolving structure.
  Further, $\Lambda$ and $S(\Lambda)$ have the same revolving pattern.

\end{theorem}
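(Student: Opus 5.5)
The plan is to build an explicit labeled digraph $\mathcal{G}$ that encodes the revolving condition in \eqref{lt82}, and then show that applying $S$ only relabels the nodes of $\mathcal{G}$. I read ``non-singular linear transformation on $\mathbb{C}$'' as a complex affine map $S(z)=az+t$ with $a\neq 0$, which covers the translations raised in the introduction. The general real-linear case is discussed at the end, and I expect it to be the main obstacle.

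\textbf{Step 1: a digraph for $\Lambda$.} The revolving condition for $W_{\pi/2}$ depends on two things: which digit is written now, and which nonzero digit was written last. It also matters whether any nonzero digit has appeared yet, since the first one must equal $1$. I therefore take the following nodes:
\begin{itemize}
  \item a start node $s$ with label $0$;
  \item nodes $n_u$ with label $u$, one for each unit $u\in\{1,i,-1,-i\}$, recording that the current digit is the nonzero digit $u$;
  \item nodes $z_u$ with label $0$, recording that the current digit is $0$ and the last nonzero digit was $u$.
\end{itemize}
The edges are:
\begin{itemize}
  \item $s\to s$ and $s\to n_1$, so the first nonzero digit is $1$;
  \item $n_u\to z_u$ and $z_u\to z_u$, for writing zeros;
  \item $n_u\to n_{iu}$ and $z_u\to n_{iu}$, because the next nonzero digit must be the counterclockwise successor $iu$.
\end{itemize}
With this graph, the infinite walks give exactly the admissible sequences in \eqref{lt82}. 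Taking $b=1+i$, $c=1$ and $\mathcal{D}=\{0,\pm1,\pm i\}$, the Mizutani--Ito conjecture proved by Kawamura \cite{Kawamura-2002} says precisely that $\Lambda$ has a revolving structure $(b,\mathcal{D},\mathcal{G})$. (Walks that eventually stay in a zero node give finite expansions, which the revolving definition allows.)

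\textbf{Step 2: absorbing $S$ into the digits.} The key identity is
\[
\sum_{n\ge1}(1+i)^{-n}=\frac{1}{(1+i)-1}=-i,
\qquad\text{hence}\qquad
t=\sum_{n\ge1}(it)(1+i)^{-n}.
\]
So for any $z=\sum_n\delta_n(1+i)^{-n}\in\Lambda$,
\[
S(z)=\sum_{n\ge1}\bigl(a\delta_n+it\bigr)(1+i)^{-n}.
\]
Now define the new labeling $\mathcal{L}'(v)=a\,\mathcal{L}(v)+it$ on the same nodes and edges, with digit set $\mathcal{D}'=a\mathcal{D}+it$. Since $a\neq0$, the map $\delta\mapsto a\delta+it$ is a bijection $\mathcal{D}\to\mathcal{D}'$. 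Hence admissible walks, and the points they represent, correspond exactly under $S$. This proves that $S(\Lambda)$ has the revolving structure $(1+i,\mathcal{D}',\mathcal{G}')$, where $\mathcal{G}'$ differs from $\mathcal{G}$ only in its labels. Therefore $\Lambda$ and $S(\Lambda)$ have the same revolving pattern. The constant $c$ in the definition could alternatively absorb the factor $a$.

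\textbf{Main obstacle.} The difficulty is a genuinely $\mathbb{R}$-linear $S$, that is $S(z)=\alpha z+\beta\bar z$ with $\beta\neq0$. Conjugation turns the base $1+i$ into $1-i=-i(1+i)$, so
\[
\bar z=\sum_n\bar\delta_n\, i^{\,n}(1+i)^{-n}.
\]
The new digit then depends on $n \bmod 4$, which the nodes above do not record. My first attempt would be a different fix: re-derive a revolving representation of $\bar\Lambda$ directly, as the attractor of the conjugated IFS (Kawamura--Allen with angle $-\pi/2$, i.e.\ the clockwise rule). Then check that its minimal digraph is isomorphic, as an unlabeled graph, to $\mathcal{G}$. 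This is plausible because reversing the direction of rotation only permutes which unit each $n_u$ points to. If that works, composing it with Step 2 handles every non-singular $S$.
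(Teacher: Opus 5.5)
Your argument is correct for $S(z)=\lambda z+\tau$ with $\lambda\neq 0$ (your $a,t$), which is exactly the class the paper's proof treats. It takes a different route from the paper.

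\textbf{How the two routes differ.} The paper conjugates the IFS, $\widehat f_j=S\circ f_j\circ S^{-1}$, and recomputes the canonical projection of $\widehat{\mathcal F}$. This gives
$\widehat\Pi(\omega)=\sum_{n\ge1}(1+i)^{-n}i^{q_{n-1}(\omega)}(i\tau+\omega_n(\lambda+(1-i)\tau))$.
On the same eight-node graph, the zero-type nodes receive the labels $i^k\cdot i\tau$ and the unit-type nodes the labels $i^k(\lambda+\tau)$. These are rotated copies of the fixed points $\tau$ and $\lambda+\tau$ of $\widehat f_0,\widehat f_1$.

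You instead push the expansion of $\Lambda$ forward through $S$, absorbing the translation via $\sum_{n\ge1}(1+i)^{-n}=-i$. Every label is then changed by the single affine map $\delta\mapsto\lambda\delta+i\tau$. Both expansions represent $S(\Pi(\omega))$ along corresponding walks, so both prove the theorem.

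\textbf{What each route buys.} Yours is shorter. It shows the statement follows formally from the affine equivariance of base-$(1+i)$ expansions, and it needs neither a new IFS nor a new iteration. Step 1 also only needs the easy inclusion \eqref{lt97}, not the full Mizutani--Ito conjecture.

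The paper's route gives a labelling generated by the IFS of $S(\Lambda)$ itself, in which the digits still revolve: each label is $i^{q_{n-1}}$ times one of the two digits $i\tau$ or $\lambda+\tau$. That is the structure Lemma~\ref{lt90} and Theorem~\ref{lt91} analyze. Your relabelling always uses the five digits $i\tau$, $i\tau\pm\lambda$, $i\tau\pm i\lambda$. This shows that the digit count $d$ there describes the paper's particular representation, not the set $S(\Lambda)$.

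\textbf{Minor points.} Your start node $s$ is redundant: it has the same label and the same out-neighbours as $z_{-i}$, and merging the two gives the paper's $\mathcal{G}_{\rm L}$. Also, ``exactly the admissible sequences'' holds only for walks starting at $s$ or $n_1$. This is harmless, since a revolving structure only needs the inclusion.

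\textbf{Your ``main obstacle.''} It lies outside the theorem's scope. The anti-conformal case needs no new derivation. With $R(z)=1-\bar z$ one has $R\circ f_0\circ R=f_1$ and $R\circ f_1\circ R=f_0$, so $R(\Lambda)=\Lambda$, i.e.\ $\bar\Lambda=1-\Lambda$. Hence every $a\bar z+t$ is already covered by your Step 2. Maps $\alpha z+\beta\bar z$ with $\alpha\beta\neq 0$ are not compositions of conjugation with complex affine maps, so the composition you propose would not reach them anyway.
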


We note that while the base and the directed graph that determines the revolving structure doesn't change under linear transformations, the digit set and thus the labeling of the nodes might be different.

\section{Revolving structure of the L\'evy dragon}

\begin{figure}
  \centering 
  \includegraphics[width=.48\linewidth]{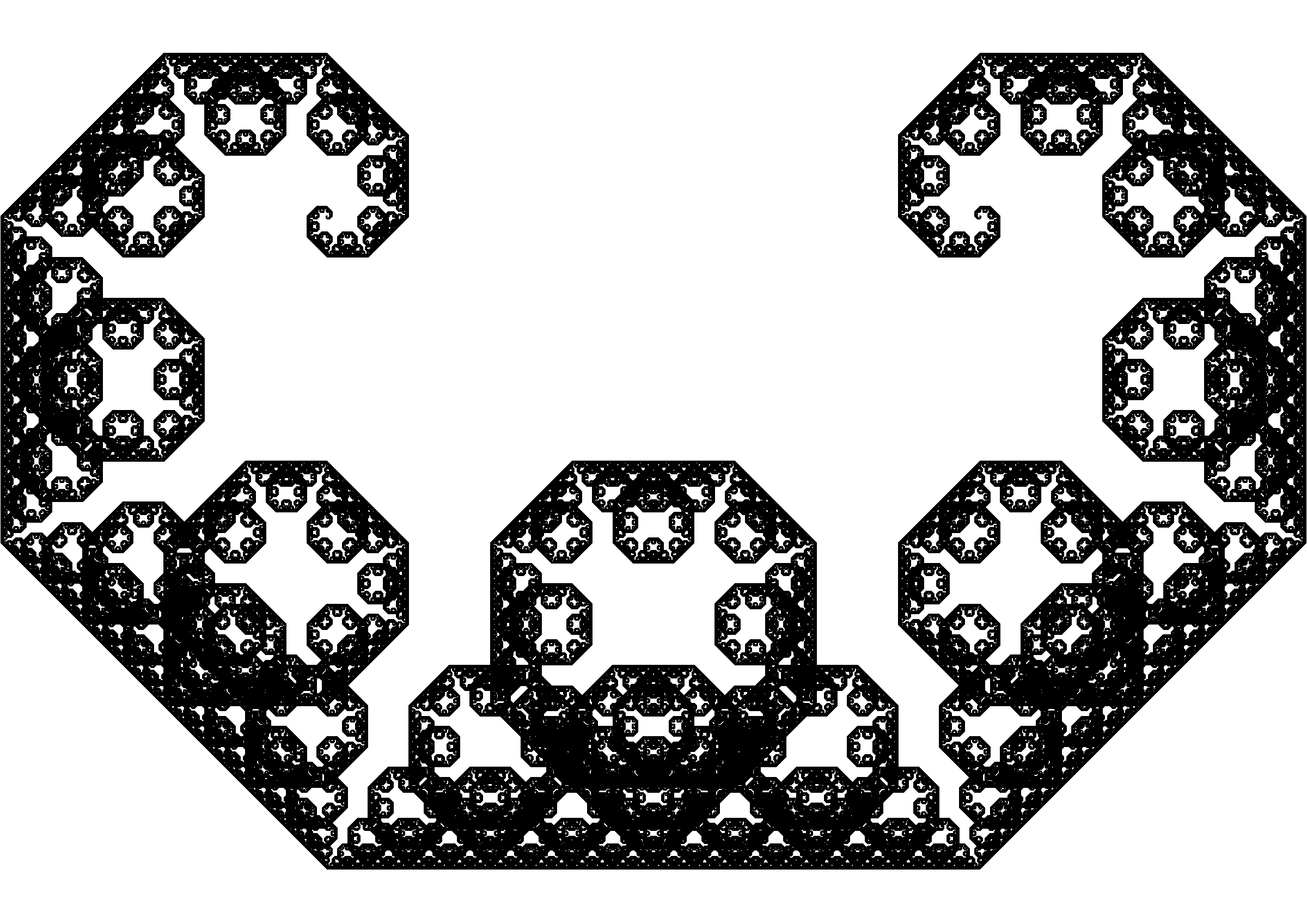}\hfill
  \includegraphics[width=.48\linewidth]{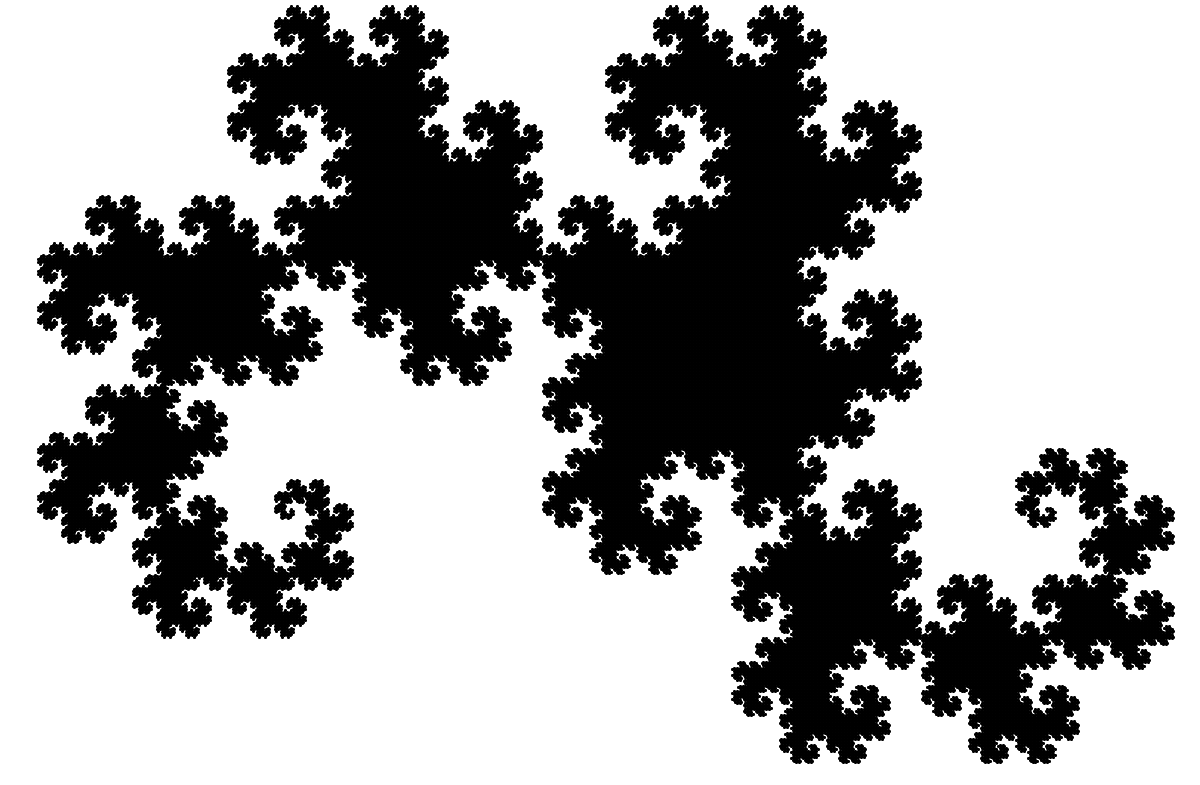}
  \caption{Two tiling fractals: the L\'evy dragon curve on the left and the paper-folding dragon on the right.}\label{lt89}
\end{figure}

Let $\Sigma=\{0,1\}^{\mathbb{N}}$ be the set of all $0-1$ sequences. We call $\Sigma$ equipped with the left-shift operator $\sigma:\Sigma\to\Sigma,\quad \sigma(\omega_1\omega_2\omega_3\dots)=\omega_2\omega_3\dots$ the \texttt{symbolic space}. We define the \texttt{canonical projection} from $\Sigma$ to the L\'evy dragon $\Lambda$ as
\[
  \Pi:\Sigma\to\Lambda,\quad 
  \Pi(\pmb{\omega})=\lim_{n\to\infty} f_{\pmb{\omega}|_n}(0),
\]
where $\pmb{\omega}|_n=\omega_1\omega_2\dots\omega_n$ and $f_{\pmb{\omega}|_n}=f_{\omega_1}\circ\dots\circ f_{\omega_n}$ for every $\pmb{\omega}\in\Sigma$.

Since $\Lambda$ satisfies \eqref{lt84}, the canonical projection $\Pi$ must satisfy the following equalities for every $\pmb{\omega}\in\Sigma$.
\begin{equation}\label{lt99}
    \Pi(\pmb{\omega})=\begin{cases}
        \frac{1-i}{2}\Pi(\sigma\pmb{\omega}) &\mbox{, if } \omega_1=0 \\
        \frac{1+i}{2}\Pi(\sigma\pmb{\omega})+\frac{1-i}{2} &\mbox{, if } \omega_1=1
    \end{cases}
\end{equation}
We may rewrite \eqref{lt99} in one line 
\begin{equation}\label{lt95}
    \Pi(\pmb{\omega})=\left(\frac{1-i}{2}\right)^{1-\omega_1}\left(\frac{1+i}{2}\right)^{\omega_1}\Pi(\sigma\pmb{\omega})+\frac{1-i}{2}\omega_1.
\end{equation}
Let $q_k(\pmb{\omega})=\sum_{i=1}^{k} \omega_i$ be the sum of the first $k$ digits of $\pmb{\omega}$. Pick an arbitrary $k\in\mathbb{N}$. We iterate \eqref{lt99} $k$ times to write $\Pi$ in the following form 
\begin{equation}\label{lt98}
    \Pi(\pmb{\omega})=\left(\frac{1-i}{2}\right)^k i^{q_k(\pmb{\omega})} \Pi(\sigma(\pmb{\omega})) +
    \sum_{n=1}^{k}\omega_n\left(\frac{1-i}{2}\right)^n i^{q_{n-1}(\pmb{\omega})}.
\end{equation}
Now, we take the limit of both sides in \eqref{lt98} as $k\to\infty$. Since $i^{q_k(\pmb{\omega})} \Pi(\sigma(\pmb{\omega}))$ is bounded and $\left|\frac{1-i}{2}\right|<1$, the first term converges to $0$. Therefore,
\begin{equation}\label{lt97}
    \Pi(\pmb{\omega})=\sum_{n=1}^{\infty} \left(\frac{1-i}{2}\right)^n i^{q_{n-1}(\pmb{\omega})} \omega_n = \sum_{n=1}^{\infty} (1+i)^{-n} i^{q_{n-1}(\pmb{\omega})} \omega_n .
\end{equation}
It shows that the points of the L\'evy dragon are the revolving numbers of base $1+i$, digit set $\{0,1,i,-1,-i\}$ and labeled directed graph $\mathcal{G}_{\rm L}$ depicted on Figure \ref{lt96}. Observe that the digit $0$ appears on several nodes; it might seem redundant, but we must keep track of the last non-zero digit in the revolving sequence, as the numbers $1,i,-1,-i$ can only follow one another in a certain order.

With the help of our symbolic space $(\Sigma,\sigma)$, we can code every point of the attractor with $0-1$ sequences. The revolving structure gives us a different coding of the points of $\Lambda$, using complex digits and a special pattern described by the labeled digraph $\mathcal{G}_{\rm L}$. For example, consider the point $x\in\Lambda$ coded by $\pmb{\omega}=(1,0,0,1,1,0,1,1,0,0,..)\in\Sigma$. Then, the corresponding digit sequence $(\delta_{j_n})_{n\geq 1}$ that gives $x$ as a revolving number is $(1,0,0,i,-1,0,-i,1,0,0,..)$.

In fact, both codings can be thought of as walks on $\mathcal{G}_{\rm L}$ if we suitably label the edges by $0$-s and $1$-s: along any directed path, the sequence of edges give us the coding on the symbolic space, while the sequence of nodes give us the revolving representation. To highlight this connection, we colored the edges of $\mathcal{G}_{\rm L}$ accordingly. We note that two directed edges start at each node; exactly one of each color.

It is natural to ask why we need a directed-graph of $8$ nodes to describe a revolving structure of only $5$ distinct digits. To answer this question, we will investigate the change of revolving structures under linear transformations.

\begin{figure}
  \includegraphics[scale=0.8]{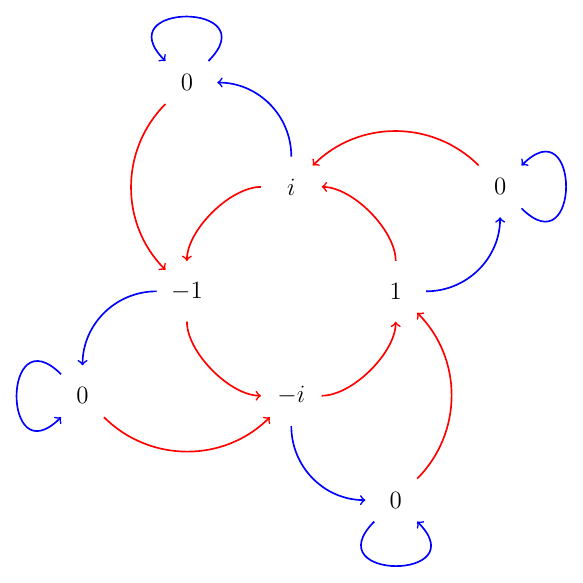}
  \caption{The labeled directed graph $\mathcal{G}_{\rm L}$ that determines the points of the L\'evy dragon as revolving numbers. By considering a sequence of directed edges instead of the corresponding sequence of vertices, one may re-obtain the coding on the symbolic space: Blue edges correspond to $0$, red edges correspond to $1$ in $\Sigma$.}\label{lt96}
\end{figure}

\section{Linear transformations}

Before proving our results for an arbitrary linear transformation, we show a simple example where the linear transformation $S$ is a translation on the complex plane. A more detailed analysis of this example can be found in \cite{Leung-2026}.
\begin{example}[Translated IFS]\label{lt88}
  Let $S(z)=z-\frac{1-i}{2}$ be a translation on the complex plane. The translated L\'evy dragon $S(\Lambda)$ is also a self-similar set, so there must exist contracting similitudes $\widehat{f}_0$ and $\widehat{f}_1$ for which 
  \begin{equation}\label{lt87}
    S(\Lambda)=\widehat{f}_0(S(\Lambda))\cup\widehat{f}_1(S(\Lambda)).
  \end{equation}
  To find these functions, observe that they perform the same action as $f_0$ and $f_1$ did on $\Lambda$. Thus, they must satisfy
  \[
    \widehat{f}_0(z)=S\circ f_0\circ S^{-1}(z),\quad
    \widehat{f}_1(z)=S\circ f_1\circ S^{-1}(z).
  \]
  That is, by substituting $f_0, f_1$ and $S$ into these formulas, 
  \begin{align*}
    \widehat{f}_0(z)&=\frac{1-i}{2}\left(z+\frac{1-i}{2}\right)-\frac{1-i}{2}= 
    \frac{1-i}{2}z-\frac{1}{2}\\
    \widehat{f}_1(z)&=\frac{1+i}{2}\left(z+\frac{1-i}{2}\right)+\frac{1-i}{2}-\frac{1-i}{2}=
    \frac{1+i}{2}z+\frac{1}{2}
  \end{align*}

  Let us write $\widehat{\Pi}:\Sigma\to\mathbb{C}$ for the canonical projection of this new, translated IFS. It follows from \eqref{lt87} that for all $\pmb{\omega}=\omega_1\omega_2\dots\in\Sigma$
  \[
    \widehat{\Pi}(\pmb{\omega})=\begin{cases}
        \frac{1-i}{2}\widehat{\Pi}(\sigma\pmb{\omega})-\frac{1}{2} &\mbox{, if } \omega_1=0 \\
        \frac{1+i}{2}\widehat{\Pi}(\sigma\pmb{\omega})+\frac{1}{2} &\mbox{, if } \omega_1=1.
    \end{cases}
  \]
  Analogously to \eqref{lt97}, we obtain
  \[
    \widehat{\Pi}(\pmb{\omega})=\frac{1+i}{2}\sum_{n=1}^{\infty} (1+i)^{-n}i^{q_{n-1}(\pmb{\omega})}(-1)^{1-\omega_n}.
  \] 
  It shows that the points of the translated L\'evy dragon are the revolving numbers of base $1+i$ and digit set $\{1,i,-1,-i\}$. It is straightforward that digits must follow each other according to the labeled directed graph on Figure \ref{lt86}. Since $q_0(\pmb{\omega})=0$ for every word $\pmb{\omega}$, the first digit $\delta_1$ is either $1$ or $-1$. More precisely, the initial node $1$ is located on the right of the circle, and $(-1)$ is the node with a self-loop at the bottom of the Figure \ref{lt86}.
	
  
  Observe that this graph looks identical to $\mathcal{G}_{\rm L}$, only the labels of the nodes are different. Therefore, $\Lambda$ and $S(\Lambda)$ has the same revolving pattern.

\end{example}

\begin{figure}
  \centering 
  \includegraphics[scale=0.8]{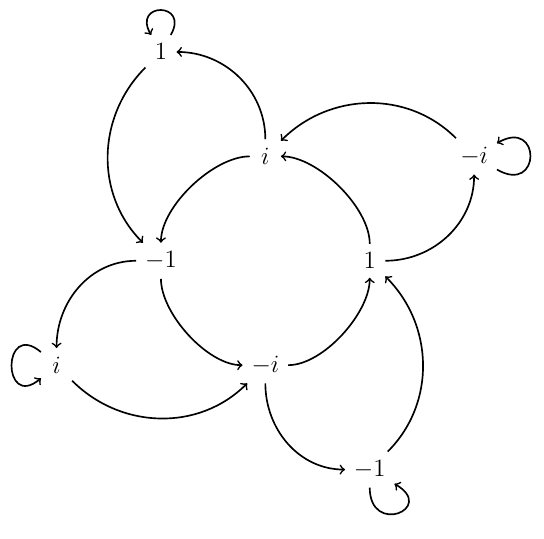}
  \caption{The labeled digraph that determines the revolving structure of the translated L\'evy dragon in Example \ref{lt88}.}\label{lt86}
\end{figure}

We note that it is rather difficult to find the similarity between the revolving structures of the sets $\Lambda$ and $S(\Lambda)$ without the labeled directed graphs we introduced. Relying on the terminology of Davis-Knuth, one must compare the revolving conditions. However, since $\Lambda$ and $S(\Lambda)$ has a different number of digits in their revolving structures, the revolving conditions seem to describe completely different dynamics.

The revolving condition of $\Lambda$ is analogous to the revolving condition of Gaussian integers: the non-zero digits must follow the cyclic pattern
\[
  \dots \to 1 \to i \to -1 \to -i \to 1 \to \dots,
\]
with arbitrary many $0$-s between them. The revolving condition of $S(\Lambda)$ is much more complicated. For instance, $1$ can be followed by both $i$ and $-i$, depending on the past digits. Sometimes we move clockwise, other times we move counterclockwise around the origin, which makes the revolving condition rather technical to formalize.

Now we give the proof of our main theorem.
\begin{proof}[Proof of Theorem~\ref{lt93}]
    Let $S(z)=\lambda z+\tau$ be a non-singular linear transformation on $\mathbb{C}$. Then $S(\Lambda)$ is the attractor of the IFS 
    \begin{equation}
      \widehat{\mathcal{F}}=\left\{
        \widehat{f}_0(z)= S\circ f_0\circ S^{-1}(z), \quad
        \widehat{f}_1(z)= S\circ f_1\circ S^{-1}(z)
      \right\}.
    \end{equation}
    Using that $S^{-1}(z)=\frac{z-\tau}{\lambda}$, a simple calculation gives
    \begin{equation}
      \widehat{\mathcal{F}}=\left\{
        \widehat{f}_0(z)= \frac{1-i}{2}z+\frac{1+i}{2}\tau, \quad
        \widehat{f}_1(z)= \frac{1+i}{2}z+\frac{1-i}{2}(\lambda+\tau) 
      \right\}.
    \end{equation}

    Analogously to \eqref{lt98}, the canonical projection $\widehat{\Pi}$ of $\widehat{\mathcal{F}}$ can be written in the form
    \begin{align*}
      \widehat{\Pi}(\pmb{\omega}) &=\left(\frac{1-i}{2}\right)^k i^{q_k(\pmb{\omega})}\widehat{\Pi}(\sigma^k\pmb{\omega})\\ 
      &+ \sum_{n=1}^{k} \left(\frac{1-i}{2}\right)^{n-1} i^{q_{n-1}(\pmb{\omega})} \left(\frac{1+i}{2}\tau +\omega_n\left(\frac{1-i}{2}\lambda -i\tau\right)\right).
    \end{align*}
    Then, we take the limit on both sides as $k\to\infty$ to obtain
    \[
      \widehat{\Pi}(\pmb{\omega})=\sum_{n=1}^{\infty} \left(1+i\right)^{-n} i^{q_{n-1}(\pmb{\omega})} \left( i\tau + \omega_n\left(\lambda +(1-i)\tau\right)\right).
    \]
    It follows that the points of the transformed L\'evy dragon $S(\Lambda)$ are the revolving numbers of base $1+i$ and digit set 
    \[
      \left\{ 
        \lambda+\tau,\: i(\lambda+\tau),\: -(\lambda+\tau),\: -i(\lambda+\tau),\: 
        \tau,\: i\tau,\: -\tau,\: -i\tau
      \right\}.
    \]
    Further, the digits must follow each other according to the directed graph on Figure \ref{lt94}. Note that the first digit $\delta_1$ is either $\lambda+\tau$ or $i\tau$. 

    \begin{figure}
    \includegraphics[scale=0.8]{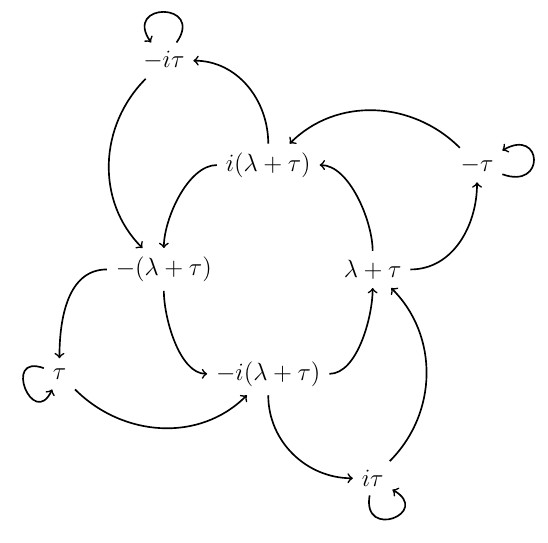}
    \caption{The directed graph of the revolving structure of $S(\Lambda)$.}\label{lt94}
    \end{figure}

\end{proof}

It follows from the proof of Theorem \ref{lt93} that the digit set of $\Lambda$ changes under the linear transformation $S(z)=\lambda z+\tau$, as the new digits depend on the parameters $\lambda$ and $\tau$. It also follows that $S(\Lambda)$ has at most 8 different digits in its revolving structure. However, since the digits depend on only 2 parameters, the number of digits cannot be any arbitrary integer between 1 and 8.
\begin{lemma}\label{lt90}
  Let $S(z)=\lambda z+\tau$ be a non-singular linear mapping on the complex plane and $S(\Lambda)$ be the corresponding transformed L\'evy dragon. Write $d$ for the number of digits in the revolving structure of $S(\Lambda)$. Then, 
  \begin{enumerate}[(a)]
    \item $d=4$ if and only if $\exists k\in\mathbb{N}:\: i^k(\lambda+\tau)=\tau$;
    \item $d=5$ if and only if $\tau=0$ or $\tau=-\lambda$;
    \item otherwise, $d=8$. 
  \end{enumerate}

\end{lemma}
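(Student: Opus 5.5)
The plan is to count the distinct elements of the digit set found in the proof of Theorem~\ref{lt93},
\[
  \mathcal{D}=\{\,i^k(\lambda+\tau)\,:\,k=0,1,2,3\,\}\cup\{\,i^k\tau\,:\,k=0,1,2,3\,\},
\]
by splitting it into the two orbits $A=\{i^k(\lambda+\tau)\}$ and $B=\{i^k\tau\}$ under multiplication by $i$. The basic observation is that for any $w\in\mathbb{C}$ the orbit $\{i^kw\}$ has exactly $4$ elements if $w\neq0$ and exactly $1$ element (namely $0$) if $w=0$, because $i^kw=i^jw$ with $w\neq 0$ forces $i^{k-j}=1$. Moreover, since $\{i^k\}$ is a group, two orbits $\{i^kw\}$ and $\{i^kw'\}$ are either equal or disjoint. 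Hence $d=|A\cup B|$ is determined by three facts: whether $\lambda+\tau=0$, whether $\tau=0$, and whether $A=B$.

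Next I would use non-singularity, $\lambda\neq0$. It shows that $\lambda+\tau$ and $\tau$ cannot both vanish, so at most one of $A,B$ is $\{0\}$. This gives the following cases.
\begin{enumerate}[(i)]
  \item If $\tau=0$ or $\tau=-\lambda$, then one orbit is $\{0\}$ and the other has $4$ nonzero elements. They are disjoint, so $d=5$.
  \item If both $\tau$ and $\lambda+\tau$ are nonzero, then $d=4$ when $A=B$ and $d=8$ otherwise. Also $A=B$ holds exactly when $\tau\in A$, i.e.\ when $i^k(\lambda+\tau)=\tau$ for some $k$.
\end{enumerate}
This yields the trichotomy with $d\in\{4,5,8\}$.

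It remains to reconcile case (a) with case (b). The condition $i^k(\lambda+\tau)=\tau$ should be read for $\tau\neq0$. In fact, if $\tau=0$ the condition forces $\lambda=0$, which is impossible. If instead $\tau=-\lambda\neq0$, the left side is $0$ while the right side is not, so the condition fails. Therefore the condition in (a) automatically excludes case (b), and the three cases are mutually exclusive and exhaustive. Since $i^k$ is $4$-periodic, $k$ ranges effectively over $\{0,1,2,3\}$, or over $\mathbb{N}$ as stated.

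The one subtle point I expect is the definition of $d$. The digit $0$ does not appear in $\mathcal{D}$ unless $\tau=0$ or $\tau=-\lambda$. I would also check that every listed digit actually occurs along some path of the graph in Figure~\ref{lt94}. Since every node there is reachable, all eight labels are used, so $d$ equals $|\mathcal{D}|$ as counted above.
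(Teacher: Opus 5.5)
Your proposal is correct and follows essentially the paper's argument: split the digits into the two $i$-orbits $\{i^k\tau\}$ and $\{i^k(\lambda+\tau)\}$ (the paper's $N_0$ and $N_1$), use $\lambda\neq 0$ to handle the case $\tau\in\{0,-\lambda\}$ giving $d=5$, and observe that two such orbits are either equal or disjoint, giving $d=4$ or $d=8$. You also check explicitly that the condition in (a) fails when $\tau\in\{0,-\lambda\}$ and that all eight labels are actually attained, which fills in steps the paper leaves implicit.
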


\begin{proof}
  We showed in the proof of Theorem \ref{lt93} that the digits of $S(\Lambda)$ are 
  \begin{equation}\label{lt92}
      \lambda+\tau,\: i(\lambda+\tau),\: -(\lambda+\tau),\: -i(\lambda+\tau),\: 
      \tau,\: i\tau,\: -\tau,\: -i\tau.
  \end{equation}

  Since $S$ in non-singular, $\lambda\neq 0$. Observe that every value in \eqref{lt92} is of the form $(\lambda+\tau)\cdot i^k$ or $\tau\cdot i^k$ for some $k\in\{0,1,2,3\}$. Clearly, if either $\tau=0$ or $\tau=-\lambda$, four digits will be 0, hence we only have 5 distinct digits.
  For instance, if $\tau=0$, the digits are 
  \[
        \lambda+\tau, i(\lambda+\tau), -(\lambda+\tau), -i(\lambda+\tau), 0.
  \]

  Assume now that $\tau\not\in\{0,-\lambda\}$. We define the sets 
  \begin{align*}
    N_0 &=\bigg\{\tau\cdot i^k:\: k\in\{0,1,2,3\}\bigg\}, \\
    N_1 &=\bigg\{(\tau+\lambda)\cdot i^k:\: k\in\{0,1,2,3\}\bigg\}.
  \end{align*}
  It follows that there are $4$ different numbers both in $N_0$ and $N_1$, hence we have at least $4$ digits. We just need to find under which condition $N_0\cap N_1\neq\emptyset$, otherwise there are $8$ distinct digits.

  If $N_0\cap N_1\neq\emptyset$, then there must exist $x\in N_0\cap N_1$. By the definition of $N_0$ and $N_1$, 
  \[
    \exists k, \widehat{k}\in\{0,1,2,3\}: \quad 
    x= \tau\cdot i^k= (\tau+\lambda)\cdot i^{\widehat{k}}.
  \]
  That is, $\tau=i^k(\tau+\lambda)$ for some $k\in\{0,1,2,3\}$. Therefore, $N_0\cap N_1\neq\emptyset$ implies $N_0= N_1$ and we only have $4$ digits. 
  
\end{proof}

\begin{remark}\label{lt85}
  Let $S(z)=\lambda z+\tau$ be a non-singular linear mapping on the complex plane and $S(\Lambda)$ be the corresponding transformed L\'evy dragon. Write $d$ for the number of digits in the revolving structure of $S(\Lambda)$.
  If $d=4$, then $\tau=i^k(\tau+\lambda)$ for some $k\in\{1,2,3\}$.
\end{remark}

\begin{proof}
  By Lemma \ref{lt90}, $\exists k\in\mathbb{N}:\: i^k(\lambda+\tau)=\tau$.
  It follows from the periodicity of the powers of $i$ that we may cover all cases by restricting ourselves to $k\in\{0,1,2,3\}$. However, since $\tau=\lambda +\tau$ leads to a contradiction, we must have $k\in\{1,2,3\}$.
\end{proof}

We may also give a geometric characterization of linear transformation $S$ based on the number of digits in the revolving structure of $S(\Lambda)$. 

\begin{theorem}\label{lt91}
  Let $S(z)=\lambda z+\tau$ be a non-singular linear mapping on the complex plane and $S(\Lambda)$ be the corresponding transformed L\'evy dragon. Write $d$ for the number of digits in the revolving structure of $S(\Lambda)$. Then, 
  \begin{enumerate}[(a)]
    \item $d=4$ if and only if the axis of symmetry of $S(\Lambda)$ crosses the origin;
    \item $d=5$ if and only if the fixed point of either $\widehat{f}_0$ or $\widehat{f}_1$ is $0$;
    \item otherwise, $d=8$. 
  \end{enumerate}

\end{theorem}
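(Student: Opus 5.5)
The plan is to reduce Theorem~\ref{lt91} to the algebraic characterization in Lemma~\ref{lt90} and Remark~\ref{lt85}, by translating each geometric condition into a condition on $\lambda$ and $\tau$. Part (c) then follows from (a) and (b) together with Lemma~\ref{lt90}(c), so all the work is in (a) and (b).

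For (b), I will compute the fixed points of the conjugated maps from the proof of Theorem~\ref{lt93}. Since $\widehat{f}_j=S\circ f_j\circ S^{-1}$, the fixed point of $\widehat{f}_j$ is $S(p_j)$, where $p_j$ is the fixed point of $f_j$. Solving directly, $f_0$ fixes $p_0=0$. For $f_1$ we need $z=\frac{1+i}{2}z+\frac{1-i}{2}$, that is $\frac{1-i}{2}z=\frac{1-i}{2}$, so $p_1=1$. Hence the fixed points of $\widehat{f}_0,\widehat{f}_1$ are $S(0)=\tau$ and $S(1)=\lambda+\tau$. One of them is $0$ exactly when $\tau=0$ or $\tau=-\lambda$, which is Lemma~\ref{lt90}(b).

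For (a), I first need to fix what ``the axis of symmetry'' of $\Lambda$ means. The initial triangle $A_0$ has vertices $0,1,\frac{1-i}{2}$, and $\Lambda$ is symmetric under the reflection across the perpendicular bisector of $[0,1]$, namely the line $\operatorname{Re} z=\tfrac12$. I would justify this symmetry from the IFS: the reflection $R(z)=1-\bar z$ satisfies $R\circ f_0\circ R^{-1}=f_1$, so $R$ swaps the two pieces and preserves the attractor. Under $S$, which is a similarity, this axis maps to the perpendicular bisector of the segment joining $S(0)=\tau$ and $S(1)=\lambda+\tau$. A point lies on the perpendicular bisector of $[\tau,\lambda+\tau]$ exactly when it is equidistant from the endpoints. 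So the origin lies on the axis if and only if $|\tau|=|\lambda+\tau|$.

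It remains to show that $|\tau|=|\lambda+\tau|$ is equivalent to $\tau=i^k(\lambda+\tau)$ for some $k\in\{1,2,3\}$, as in Remark~\ref{lt85}; the case $k=0$ is excluded because $\lambda\neq0$. This step is the main obstacle. The forward implication is clear, but equal moduli only says that $\tau$ and $\lambda+\tau$ differ by a unimodular factor, not by a power of $i$. My first attempt would be to check whether the authors intend a further restriction, for instance $\lambda$ real or $S$ in a specific family. If not, I would check whether the digit count should be taken up to the actual coincidences among the eight values in \eqref{lt92}. If neither resolves it, I would argue that the intended notion of axis refers to the lattice-aligned symmetry lines of $S(\Lambda)$. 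The dragon has additional symmetry lines through the corner points, and $S$ maps these to lines through $S(\tfrac{1-i}{2})$ and related points. The condition $\tau=i^k(\lambda+\tau)$ for $k=1,3$ says that $0$ is the apex of the isosceles right triangle over $[\tau,\lambda+\tau]$, that is, $0$ is $S$ of one of the two points $\frac{1\pm i}{2}$. For $k=2$ it says that $0$ is the midpoint $S(\tfrac12)$. All three points lie on the line $\operatorname{Re}z=\tfrac12$, so the origin is on the axis in these cases. I would present (a) through this geometric identification of the three special points on the axis, noting that the converse needs the origin to sit at one of these distinguished positions.
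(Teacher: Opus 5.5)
Your treatment of (b), (c) and the forward half of (a) follows the paper. The fixed points of $\widehat f_0,\widehat f_1$ are $\tau=S(0)$ and $\lambda+\tau=S(1)$. By Remark~\ref{lt85}, $d=4$ gives $|\tau|=|\lambda+\tau|$. Then $0$ lies on the perpendicular bisector of $[\tau,\lambda+\tau]$, which is the image under $S$ of the axis $\mathrm{Re}\,z=\tfrac12$ of $\Lambda$. Getting the fixed points by conjugation, and justifying the axis via $R(z)=1-\bar z$ with $R\circ f_0\circ R^{-1}=f_1$, are small improvements on what the paper only asserts.

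The obstacle you hit cannot be removed, because the ``if'' half of (a) is false. The paper's proof only argues that $d=4$ implies the axis passes through $0$; it never addresses the converse.

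\textbf{Counterexample.} Take the translation $S(z)=z-\tfrac12+iy$ with $y$ real. For every $y$, the axis of $S(\Lambda)$ is $S(\{\mathrm{Re}\,z=\tfrac12\})$, the imaginary axis, which contains $0$. Here $\tau=-\tfrac12+iy$ and $\lambda+\tau=\tfrac12+iy$. These satisfy $\tau=i^k(\lambda+\tau)$ only for $y\in\{0,\pm\tfrac12\}$; for $y=1$ the ratio is $(3+4i)/5$. Also $\tau\notin\{0,-\lambda\}$, so Lemma~\ref{lt90} gives $d=8$ for every other $y$.

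\textbf{Your proposed rescues do not work.}
\begin{enumerate}
\item Restricting $\lambda$ does not help, since the counterexample is a pure translation with $\lambda=1$.
\item Counting only distinct values does not help, since Lemma~\ref{lt90} already does exactly that.
\item Drop the appeal to ``additional symmetry lines through the corner points.'' It is unsupported, and you do not need it: your three special points all lie on $\mathrm{Re}\,z=\tfrac12$.
\end{enumerate}

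\textbf{What to do instead.} Commit to your last observation, which holds in both directions. Write $0=S(p)$, i.e.\ $p=-\tau/\lambda$. Then $\tau=i^k(\lambda+\tau)$ is equivalent to $p=i^k/(i^k-1)$, and this equals $\tfrac{1-i}{2},\tfrac12,\tfrac{1+i}{2}$ for $k=1,2,3$. So the correct statement is
\[
d=4\iff 0\in\{S(\tfrac12),S(\tfrac{1+i}{2}),S(\tfrac{1-i}{2})\},
\]
and it implies the forward half of (a). Part (c) then holds only in one direction: if neither geometric condition holds, then $d=8$. The converse fails by the same example.
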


\begin{minipage}{0.45\textwidth}
  \centering
  \includegraphics[scale=.45]{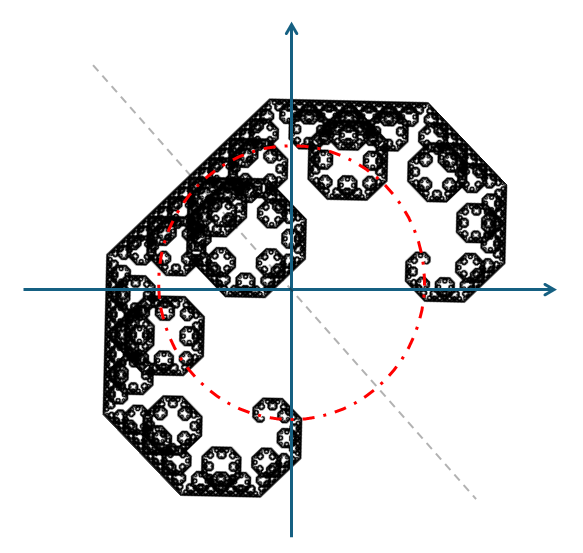}
\end{minipage}
\hspace{0.05\textwidth} 
\begin{minipage}{0.45\textwidth}
  When the transformed L\'evy dragon's axis of symmetry goes through the origin, the fixed points of $\widehat{f}_0$ and $\widehat{f}_1$ are on the same circle around $0$. 
  In this case the revolving structure has only $4$ distinct digits.
\end{minipage}

\begin{minipage}{0.45\textwidth}
  If the fixed point of either $\widehat{f}_0$ or $\widehat{f}_1$ is at $0$, there are only $5$ distinct digits in the revolving structure. The original L\'evy dragon is also in this category.
\end{minipage}
\hspace{0.05\textwidth} 
\begin{minipage}{0.45\textwidth}
  \centering
  \includegraphics[scale=.45]{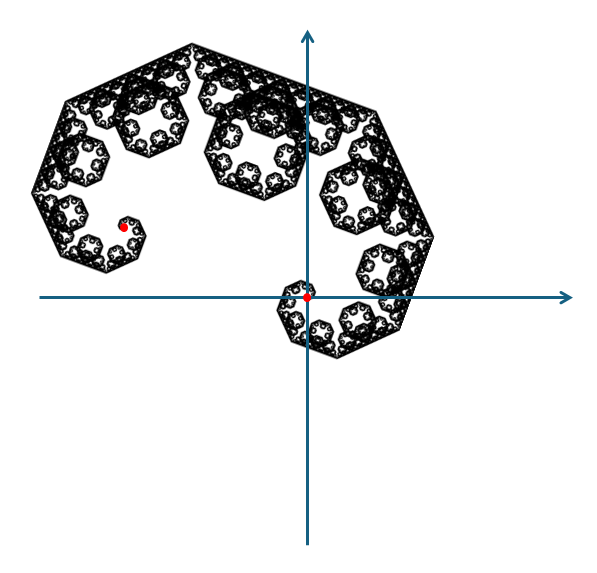}
\end{minipage}

\begin{minipage}{0.45\textwidth}
  \centering
  \includegraphics[scale=.45]{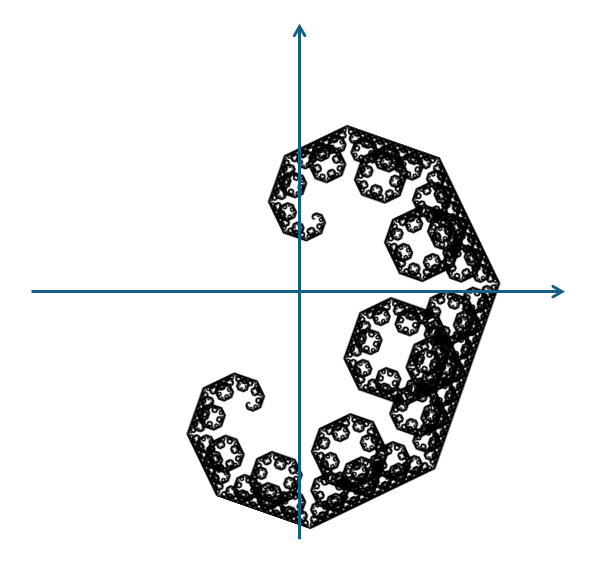}
\end{minipage}
\hspace{0.05\textwidth} 
\begin{minipage}{0.45\textwidth}
  If the transformed L\'evy dragon is in a general position not submitting to the aforementioned alignments, then all $8$ digits in the revolving structure are different values.
\end{minipage}

\begin{proof}

  First, we need to calculate the fixed points of $\widehat{f}_0$ and $\widehat{f}_1$. Recall that 
  \[
    \widehat{f}_0(z)= \frac{1-i}{2}z+\frac{1+i}{2}\tau, \quad
    \widehat{f}_1(z)= \frac{1+i}{2}z+\frac{1-i}{2}(\lambda+\tau). 
  \]
  It is straightforward that $\widehat{f}_0(\tau)=\tau$ and $\widehat{f}_1(\lambda+\tau)=\lambda+\tau$. To prove the theorem, we are left to show that the transformed L\'evy dragon $S(\Lambda)$ is in a certain position relative to the origin when $\tau$ and $\lambda+\tau$ satisfy the corresponding assumption given in Lemma \ref{lt90}
  
  Assume that $d=4$. By Remark \ref{lt85}, there exists $k\in\{1,2,3\}$ such that $i^k(\lambda+\tau)=\tau$. It follows that $|\lambda+\tau|=|\tau|$, hence they are on the circumference of some circle $C$ centered at the origin. 
  The axis of symmetry of $S(\Lambda)$ is the perpendicular bisector of the line segment between $\lambda+\tau$ and $\tau$. This axis must go through the center of $C$, which is the origin. 

  Since contractive functions have unique fixed points, $f_0'$ or $f_1'$ having a fixed point at $0$ implies that either $\tau=0$ or $\lambda+\tau=0$. By Lemma \ref{lt90}, this is equivalent to $d=5$.

  We conclude the proof by referring to part (c) of Lemma \ref{lt90}.

\end{proof}

\section{Future directions}

We plan to extend our analysis to all self-similar attractors on the plane that admit a revolving structure. To do that, first we need to find conditions on the functions in the IFS that guarantee that the attractor has a revolving structure.

Thanks to the symmetry of the L\'evy dragon, we could describe every isometric image of $\Lambda$ with the help of linear transformations. In the general case, we cannot assume any symmetry, and hence we must incorporate reflections separately into our investigation. 

Further, we plan to study the change in the revolving structure under more complex transformations as well.

\end{document}